\newtheorem{assumption}{Assumption}
\newtheorem{theorem}{Theorem}
\newtheorem{lemma}[theorem]{Lemma}
\newtheorem{remark}[theorem]{Remark}
\newtheorem{definition}[theorem]{Definition}
\begin{document}

\title[Adaptive Numerical Methods for SDEwMS]{Adaptive Mesh Construction for the Numerical Solution of Stochastic Differential Equations with Markovian Switching}

\author{C\'onall Kelly}
\address{School of Mathematical Sciences, University College Cork, Western Gateway Building, Western Road, Cork, Ireland.}
\email{conall.kelly@ucc.ie}

\author{Kate O'Donovan}
\address{School of Mathematical Sciences, University College Cork, Western Gateway Building, Western Road, Cork, Ireland.}
\email{116370906@umail.ucc.ie}

%
%

\begin{abstract}
We demonstrate an approach to the numerical solution of nonlinear stochastic differential equations with Markovian switching. Such equations describe the stochastic dynamics of processes where the drift and diffusion coefficients are subject to random state changes according to a Markov chain with finite state space. We propose a variant of the Jump Adapted-Adaptive approach introduced by K, Lord, \& Sun~(2025) to construct nonuniform meshes for explicit numerical schemes that adjust timesteps locally to rapid changes in the numerical solution and which also incorporate the switching times of an underlying Markov chain as meshpoints. It is shown that a hybrid scheme using such a mesh that combines an efficient explicit method (to be used frequently) and a potentially inefficient backstop method (to be used occasionally) will display strong convergence in mean-square of order $\delta$ if both methods satisfy a mean-square consistency condition of the same order in the absence of switching. We demonstrate the construction of an order $\delta=1$ method of this type and apply it to generate empirical distributions of a nonlinear SDE model of telomere length in DNA replication.
\end{abstract}

\maketitle

\section{Introduction}
\label{sec:1}
We examine a method of constructing adaptive meshes for the efficient numerical solution of $d$-dimensional system of stochastic differential equations with Markovian switching (SDEwMS) of the form 
\begin{equation}
\begin{split}\label{GeneralSDEwMS}
    dX(t) &= f(X(t), r(t))dt + g(X(t), r(t))dW(t), \quad 0 \leq t \leq T,\\
    X(0) &= x_0\in\mathbb{R}^d.
\end{split}
\end{equation}
In \eqref{GeneralSDEwMS}, the drift and diffusion coefficients f and g take the form
\[
f: \mathbb{R}^d \times \mathbb{S} \rightarrow \mathbb{R}^d, \quad \text{and} \quad g: \mathbb{R}^d \times \mathbb{S} \rightarrow \mathbb{R}^{d \times m}.
\]
The stochastic dynamics of a process governed by \eqref{GeneralSDEwMS} are influenced by two independent sources of noise. The process $r$ dictates changes occurring at random intervals in the functional form of $f$ and $g$, and is described by a scalar continuous time Markov chain taking values in the set \(\mathbb{S}=\{1,2,..,L\}\). The process $W$ is an $m$-dimensional Brownian motion and captures the effect of a diffusion process on trajectories. We characterise both mathematically in Section \ref{sec:2}.

We are motivated by the following example where a scalar SDE is used to model the shortening over time of telomere length in DNA replication in jackdaws: 
\begin{equation}\label{eq:telomere}
dL(t)=-\left(c+aL(t)^2\right)dt+\sqrt{\frac{1}{3}aL(t)^3}dW(t),
\end{equation}
where $W$ is a scalar standard Brownian motion. As presented in \cite{grasman2011stochastic}, the model arises as the Langevin equation associated with the Fokker-Planck PDE 
\[
\frac{\partial p}{\partial t}=\frac{\partial}{\partial l}\left\{\left(c+\frac{1}{2}al^2\right)p\right\}+\frac{1}{2}\frac{\partial^2}{\partial l^2}\left\{\left(\frac{1}{3}al^3\right)p\right\},
\]
which governs the probability density function $p(t,l)$ for the telomere length $l$ at time $t$. By simulating an ensemble of trajectories of \eqref{eq:telomere} we can produce a density histogram of lengths $l=L$ at a fixed time $t=T$ which serves to numerically approximate $p(T,L)$. The parameters $c$ and $a$ control the underlying decay rate of the length and the intensity at which random breaks occur in the telomere respectively. 

The general form of the SDE \eqref{GeneralSDEwMS} without Markovian switching, can be written
\begin{equation}
\begin{split}\label{GeneralSDE}
    dX(t) &= f(X(t))dt + g(X(t))dW(t), \quad 0 \leq t \leq T,\\
    X(0) &= x_0\in\mathbb{R}^d.
\end{split}
\end{equation}
We can see that the telomere length SDE~\eqref{eq:telomere} is a special case of \eqref{GeneralSDE} with $d=m=1$, $f(x)=-(c+ax^2)$, and $g(x)=(ax^3/3)^{1/2}$.

Suppose we wish to solve \eqref{GeneralSDE} numerically over an arbitrary mesh $\{t_n\}=\{0=t_0,t_1,\ldots,t_{N-1},t_N=T$. Over any step $[t_n,t_{n+1}]$, solutions of \eqref{GeneralSDE} satisfy the integral equation
\begin{equation}\label{eq:SODE}
    X(s) = X(t_n) + \int_{t_n}^s f(X(r))dr + \int_{t_n}^s g(X(r))dW(r),\quad s\in[t_n,t_{n+1}].
\end{equation}
By fixing the solution at the beginning of the interval $[t_n,t_{n+1}]$, we achieve a continuous form of the explicit Euler-Maruyama approximation
\begin{equation} \label{eq:EMcont} 
    \bar{X} (s)  = X_n + \int_{t_n}^s f\left(\bar{X}(t_n)\right) dr + \int_{t_n}^s g\left(\bar{X}(t_n)\right) dW(r),\quad s\in[t_n,t_{n+1}].
\end{equation}
Setting $X_n:=\bar X(t_n)$, we can write the scheme in its computational form as 
\begin{equation}\label{eq:EMdiscrete}
    X_{n+1} = X_n + h_{n+1}f(X_n) + g(X_n)\Delta W_{n+1},\quad n=0,\ldots,N-1,
\end{equation}
where $h_{n+1}=t_{n+1}-t_n$ and $\Delta W_{n+1}=W(t_{n+1}) - W(t_n)$. Suppose there exists a constant maximum stepsize $h_{\max}\in(0,\infty)$ such that $h_n\leq h_{\max}$ for all $n=0,\ldots,N$. We say that solutions of \eqref{eq:EMcont} (or equivalently \eqref{eq:EMdiscrete}) converge strongly to solutions of \eqref{GeneralSDE} in $L_p$ with order $\gamma$ if and only if
\[
\max_{t\in[0,T]}\left(\mathbb{E}\left[\|X(t)-\bar X(t)\|^p\right]\right)^{1/p}\leq Ch_{\max}^{\gamma}.
\]
If $p=2$ then we say that \eqref{eq:EMcont} converges strongly in $L_2$, in mean-square. It is the case that the Euler-Maruyama method \eqref{eq:EMcont} converges in mean-square to \eqref{GeneralSDE} with order $\gamma = 1/2$ when f, g are globally Lipschitz and $h_n \equiv h$.

Hutzenthaler et al~\cite{hutzenthaler2011strong} confirmed that solutions of \eqref{eq:EMdiscrete} with constant stepsize $h_n\equiv h$ fails to converge strongly in the strong sense to solutions of \eqref{GeneralSDE} if either $f$ or $g$ grow superlinearly in norm. Indeed this is the case for the telomere model \eqref{eq:telomere}. To address this issue, many novel explicit methods have been proposed that converge strongly even for highly nonlinear drift and diffusion coefficients; these include tamed, projected, truncated, and adaptive variants of both Euler-Maruyama and Milstein methods (see, for example \cite{Beyn2017,Guo2018,HJK2012,mao2015truncated,wang2013tamed}). In particular we highlight the adaptive timestepping approach that was introduced for the Euler-Maruyama method in \cite{kelly2018adaptive} and extended to the Milstein method in \cite{KLS2023}, where it was used to construct a numerical method converging strongly with order $\gamma=1$ and to generate trajectories of the telomere SDE \eqref{eq:telomere}. 

We wish to extend this capability to models where the coefficients $f$ and $g$ undergo Markovian switching. For example in the case of the telomere SDE \eqref{eq:telomere} it may be that each of the model parameters in the pair $(c,a)$ undergo transition between two distinct values according to a Markov process. Alternatively one can view Markovian switching in the model parameters as a form of stress testing to investigate the robustness of sample distributions to random changes in these parameters.

We take as our template the approach to mesh construction in \cite{kelly2024strong}, where it was shown that an adaptive numerical scheme that satisfies a particular mean-square consistency bound can be extended to ensure strong convergence in mean-square for SDEs with jump perturbations (SJDEs) as long as the jump times are incorporated into the adaptive mesh in a particular way. In such a case the mesh is adapting both to the evolving behaviour of the approximate trajectory and directly to jumps when they occur. We refer to these as Jump Adapted-Adaptive Methods (JA-AMs).

In this article, we show that an analogue of the type of mesh used for JA-AMs also ensures convergence of explicit numerical schemes satisfying a similar mean-square consistency condition to SDEwMS governed by \eqref{GeneralSDEwMS}, as long as we include the times at which the Markov chain $r$ switches between distinct states. We prove strong convergence of such schemes in mean-square, noting that the major challenge of this analysis is dealing with the fact that the number of Markov transitions on any interval is finite but unbounded, and we handle it in the same manner as in the jump case \cite{kelly2024strong}. Finally we demonstrate the practical application of this approach on a variant of SDE model of telomere shortening \eqref{eq:telomere} with stochastic dynamics governed by coefficients that undergo transitions at random times.

In Section \ref{sec:2} we provide a complete mathematical description of the setting, including of $W$ and $r$, and provide conditions on $f$ and $g$ for which unique solutions of \eqref{GeneralSDEwMS} exist on $[0,T]$ for $T<\infty$. In Section 3 we describe how an adaptive mesh can be constructed that adapts both to the nonlinear functional response affecting the numerical solutions of \eqref{GeneralSDEwMS} and which incorporate the switching times of $r$. We describe the general form of our adaptive scheme and present a version of it based upon the Milstein method. In Section \ref{sec:4} we state and prove our man strong convergence theorem, which in particular confirms that the Milstein scheme applied over this class of adaptive meshes will converge strongly in $L_2$ with order $\delta=1$ to solutions of \eqref{GeneralSDEwMS}. Finally in Section \ref{sec:5} we describe the practical implementation of the scheme and demonstrate its use to generate sample histograms of telomere length in base pairs using a nonlinear SDEwMS model.

\section{Mathematical Preliminaries}
\label{sec:2}

In this section, we mathematically characterise the random processes $W$ and $r$ influencing the dynamics of \eqref{GeneralSDEwMS}, then specify additional constraints on the coefficients $f$ and $g$ that are necessary for our main result.

\subsection{Characterisation of random inputs}
First, let \((\Omega, \mathcal{F}, (\mathcal{F}_t)_{t\geq0}, \mathbb{P})\) be a complete probability space with a filtration \((\mathcal{F}_t)_{t\geq0}\) which is right continuous and where \(\mathcal{F}_0\) contains all $\mathbb{P}$-null sets. The process $W(t) = (W(t_1), . . .,W(t_m))^T$ is an $m$-dimensional Brownian motion defined on this probability space such that each $W_i$ is independent of $W_j$ for $i\neq j$, and each $(W_i)_{t\in[0,T]}$ is a scalar stochastic process with $W_i(0)=0$, a.s. continuous trajectories, and stationary increments that are independent on non-overlapping intervals.

Let \(r(t)\), \( t \geq 0 \) be a right-continuous Markov chain on the same probability space, taking values in \( \mathbb{S}\) with generator \(\Gamma = (\gamma_{ij})_{L \times L} \) given by

\begin{equation} \label{probrdelta}
\mathbb{P}[r(t+\Delta) = j|r(t) = i] = 
\begin{cases}
    \gamma_{ij}\Delta + o(\Delta), & \text{if $i$ $\neq$ $j$};    \\
    1+ \gamma_{ii}\Delta + o(\Delta), & \text{if $i$ = $j$};
\end{cases}
\end{equation}
where \(\Delta > 0\) and $o(\Delta)$ are higher order terms of $\Delta$. Here \(\gamma_{ij} \geq 0\) is the transition rate from $i$ to $j$ if \(i \neq j\) while 
\begin{equation} \label{gammaii}
\gamma_{ii} = - \sum_{j \neq i} \gamma_{ij}
\end{equation}
We assume that the standard Brownian Motion \(W(\cdot)\) and the Markov chain \(r(\cdot)\) are both \(\mathcal{F}_t\)-adapted, and for the latter the initial value $r(0)=r_0$ is an $\mathcal{F}_0$-measurable random variable taking values in $\mathbb{S}$. 

The generator matrix \(\Gamma\) for a state space $\mathbb{S} = \{1, . . ., L\}$ is 
\begin{equation} \label{generatorMatrix}
\Gamma = 
\begin{bmatrix}
    \gamma_{11} & \gamma_{12} & ... & \gamma_{1L}\\
    \gamma_{21} & \gamma_{22} & ... & \gamma_{2L}\\
    & &. \\
    & &. \\
    \gamma_{L1} & \gamma_{L2} & ... & \gamma_{LL}
\end{bmatrix}
\end{equation}
From \eqref{gammaii} we see that $\sum_{j=i}^{L} \gamma_{ij} =1$ for all $i = 1, ..., L$. 
Denote the $k^{th}$ time at which the Markov chain switches between distinct states as $\tau_k$. Then \((\tau_i)_{i\geq0}\) is an increasing sequence of non-negative random variables which can be written
\begin{equation}
    (\tau_i)_{i\geq0} = \sum_{k = 0}^i \pi_k,
\end{equation}
where $\pi_k$ denotes the waiting time between the $k^{th}$ and $(k+1)^{th}$ such transitions. In practice, as long as we keep track of the current state $i\in\mathbb{S}$ of the Markov chain, these waiting times may be computed as the holding times associated with each $i$, and these are modelled as exponentially distributed random variables with state-dependent rates \(\lambda_i\): see Section \ref{sec:5}. 

Associated with the Markov chain \(r(t)\) is a process \((\bar{N}_t)_{t\geq0}\) that counts the number of switches that take place over the interval \([0,T]\) for all \(t\geq0\), written 
\begin{equation}
     \bar{N}_t := \#\{i \geq 1, \tau_i \in [0,T]\}.
\end{equation}
Since the state space $\mathbb{S}$ is a finite set, $\bar{N}_t$ is almost surely (a.s) finite on any interval $[0,T]$: see for example \cite[Theorem 2.7.1]{Norris_1997}. However $\bar N_t(\omega)$ is not uniformly bounded from above across the sample space $\Omega$ for any  $t\in[0,T]$, and this presents a significant challenge in our main convergence result. A key to overcoming this challenge is the following.
\begin{assumption}\label{assum:Fdecom}
We suppose that \((\mathcal{F}_t)_{t\geq0}\) can be decomposed such that \(\mathcal{F}_t = \sigma\left(\mathcal{G}_t \cup \mathcal{H}_T \right) \) for all $t \geq 0$, where $W$ is adapted to the filtration $(\mathcal{G}_t)_{t\geq0}$, $r$ is adapted to the filtration $(\mathcal{H}_t)_{t \geq 0}$, and moreover the $\sigma$-algebra $\mathcal{G}_t$ is independent of $\mathcal{H}_T$ for all $t \in [0,T]$. 
\end{assumption}
Assumption \ref{assum:Fdecom} holds if \(r(\cdot)\) is independent of \(W(\cdot)\), and in practice it allows us to simulate trajectories of \eqref{GeneralSDEwMS} by precomputing trajectories of $r$ and incorporating switching times into an adaptive mesh.

\subsection{Minimal conditions on the drift and diffusion coefficients $f$ and $g$.}

We identify conditions on $f$ and $g$ that ensure the existence of unique solutions over each interval $[0,T]$. We require that both satisfy a local Lipschitz condition:
\begin{assumption}\label{assum:locLip}
    For every integer $k\geq 1$, there exists a positive constant $h_k$ such that, for all  \(i \in \mathbb{S}\) and those \(x, y \in \mathbb{R}^n\) with \(|x| \vee |y| \leq k\)
    \begin{equation}
        \|f(x, i)-f(y,i)\|^2 \vee \|g(x,i)-g(y,i)\|^2 \leq \bar{h}_k \|x-y\|^2,
    \end{equation}
\end{assumption} 
Consider also the following linear growth condition:
\begin{assumption}\label{assum:linGrow}
$f$ and $g$ satisfy a linear growth bound if for all \((x,i) \in \mathbb{R}^d \times \mathbb{S}\) there exists a constant \(0 < K_3< \infty\) such that 
\begin{equation}
    \|f(x, i)\|^2 \vee \|g(x, i)\|^2 \leq K_3 (1 + \|x\|^2).
\end{equation}
\end{assumption}
Under Assumptions \ref{assum:locLip} and \ref{assum:linGrow} there exists a unique solution to \eqref{GeneralSDEwMS} on each \([0,T]\): See Mao \& Yuan~\cite[Theorem 3.16]{mao2006stochastic}.

It is possible to replace the linear growth condition with a weaker constraint. If Assumption \ref{assum:locLip} holds, so that $f$ and $g$ satisfy a local Lipschitz condition, and they also satisfy the following monotonicity condition
\begin{assumption} \label{assump2.3}
    There exists a positive constant K such that for all $(x, i) \in \mathbb{R}^d \times [0, T] \times \mathbb{S}$,
    \begin{equation}
        x^T f(x,i) + \frac{1}{2} \|g(x,i)\|^2 \leq K(1 + \|x\|^2).
    \end{equation}    
\end{assumption}
Then there exists a unique solution $X$ to Eq. \eqref{GeneralSDEwMS} on each $[0,T]$. Under the same conditions (Assumption \ref{assum:locLip} and either Assumption \ref{assum:linGrow} or \ref{assump2.3}) solutions of \eqref{GeneralSDEwMS} also satisfy uniform moment bounds and H\"older regularity bounds: for details see~\cite{mao2006stochastic}. 

In our main strong convergence proof we will make use of the following continuous form of the Gronwall inequality (see, for example Mao~\cite[Theorem 8.1]{mao2007SDEapp}):
\begin{lemma} \label{Gronwall}
Let $T > 0$ and $c \geq 0$. Let $u$ be a Borel measurable bounded nonnegative function on $[0,T]$, and let $v$ be a nonnegative integrable function on ${0,T}$. If
\[
u(t) \leq c + \int_0^t v(s)u(s) ds \quad \text{for all} \quad 0 \leq t \leq T,
\]
then
\begin{equation}
u(t) \leq c \exp\Big( \int_0^t v(s) ds \Big) \quad \text{for all} \quad 0 \leq t \leq T.
\end{equation}

\end{lemma}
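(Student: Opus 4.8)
The plan is to use the integrating-factor method rather than attacking $u$ directly, since $u$ is only assumed bounded and measurable and so need not be differentiable or even continuous. First I would introduce the auxiliary function
\[
U(t) := c + \int_0^t v(s) u(s)\,ds,
\]
which dominates $u$ by hypothesis ($u(t) \le U(t)$) and which, being the indefinite integral of the integrable function $vu$, is absolutely continuous on $[0,T]$ with $U(0) = c$. By the fundamental theorem of calculus for absolutely continuous functions, $U'(t) = v(t) u(t)$ for almost every $t$, and combining this with $u(t) \le U(t)$ and $v \ge 0$ gives the differential inequality $U'(t) \le v(t) U(t)$ a.e.

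Next I would introduce the integrating factor. Writing $V(t) := \int_0^t v(s)\,ds$, which is absolutely continuous and nondecreasing, I would consider the product $\Phi(t) := U(t) e^{-V(t)}$. Since both factors are absolutely continuous and bounded on $[0,T]$, so is $\Phi$, and the product rule for absolutely continuous functions yields
\[
\Phi'(t) = e^{-V(t)}\bigl(U'(t) - v(t) U(t)\bigr) \le 0 \quad \text{for a.e. } t.
\]
An absolutely continuous function with almost-everywhere nonpositive derivative is nonincreasing, so $\Phi(t) \le \Phi(0) = U(0) = c$ for all $t \in [0,T]$. Rearranging gives $U(t) \le c\,e^{V(t)}$, and since $u(t) \le U(t)$ the claimed bound $u(t) \le c \exp\bigl(\int_0^t v(s)\,ds\bigr)$ follows at once.

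The main obstacle is entirely measure-theoretic bookkeeping rather than any substantive analytic difficulty. Because $u$ carries no regularity beyond boundedness and measurability, the derivative identity $U' = vu$ and the inequality $\Phi' \le 0$ hold only almost everywhere; the crucial step is to justify passing from an a.e. sign condition on $\Phi'$ to the pointwise monotonicity of $\Phi$, which is exactly where absolute continuity (as opposed to mere a.e. differentiability) is needed. An alternative route avoids differentiation altogether: one may iterate the integral inequality, substituting the bound for $u$ into itself repeatedly, and identify the $n$-fold nested integral of $v$ with $\tfrac{1}{n!}\bigl(\int_0^t v\bigr)^n$ via a symmetrization argument, so that the resulting series sums to $c\exp\bigl(\int_0^t v\bigr)$ once the nested remainder term is shown to vanish as $n \to \infty$ using the boundedness of $u$. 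I would present the integrating-factor argument as the primary proof for its brevity.
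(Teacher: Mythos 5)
Your integrating-factor argument is correct: $U$ is absolutely continuous because $vu$ is integrable, the a.e.\ differential inequality $\Phi'\le 0$ follows from $u\le U$ and $v\ge 0$, and absolute continuity is exactly what licenses the passage from an a.e.\ sign condition on $\Phi'$ to monotonicity of $\Phi$. Note that the paper does not prove this lemma at all --- it is quoted directly from Mao \cite[Theorem 8.1]{mao2007SDEapp} --- so there is no in-paper proof to compare against; your write-up (and the iteration alternative you sketch) is the standard argument and would serve as a self-contained substitute for the citation.
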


\section{Adaptive timestepping for an SDEwMS}
\label{sec:3}
In this section we define our adaptive mesh as a variant of the construction in \cite{kelly2024strong}, and characterise the Euler-Maruyama and Milstein methods method used to generate the approximate solutions for the class of SDEsWMS.

\subsection{Structure of an adaptive mesh}\label{sec:mesh}

Let \(\{h_{n+1}\}_{n \in \mathbb{N}}\) be a sequence of strictly positive random timesteps with corresponding random times \( \{t_n := \sum_{i=1}^n h_i\}_{n \in \mathbb{N}\setminus\{0\}}\), where \(t_0 = 0\). We set up a framework by which nodes in this mesh can be generated procedurally as a function of the solution at the start of each step, while also ensuring that the timepoints at which the Markov chain switches between distinct states are included. 

\begin{assumption} \label{assump3.1}
    For each term of the sequence of random timesteps \(\{h_{n+1}\}_{n\in\mathbb{N}}\) at least one of the following three possibilities holds:
    \begin{enumerate}
        \item there are constant values \(h_{\text{max}} > h_{\text{min}} > 0\), \(\rho > 1\) such that \(h_{\text{max}} = \rho h_{\text{min}}\), and
    \[
    0 < h_{\text{min}} \leq h_{n+1} \leq h_{\text{max}} \leq 1;
    \]
        \item there exists a switching time $\tau_k$, as defined in Section 2, such that $t_{n+1} = \tau_k$, so that 
        \[
        0 < h_{n+1} = \tau_k -t_n \leq h_{\max} \leq 1;
        \]
        \item $t_{n+1} = T$, so that
        \[
        0 < h_{n+1} = T - t_n \leq h_{\max} \leq 1.
        \]
    \end{enumerate}
In addition we assume each $h_{n+1}$ is $\mathcal{F}_{t_n}$-measurable.
\end{assumption}
It was shown in \cite{kelly2024strong} that as long as $h_1$ can be computed from the initial data, and successive adaptive timesteps $h_{k+1}$ are computed on each trajectory as a deterministic function of the numerical approximation at time $t_k$, then each $t_n$ on a mesh satisfying the conditions of Assumption \ref{assump3.1} is an $\mathcal{F}_t$-stopping time.

\begin{definition} \label{DefN(t)}
    Let \(N^{(t)}\) be a random integer such that 
    \begin{equation}
        N^{(t)} := \max\{n \in \mathbb{N}\ \{0\} : t_{n-1} < t\},
    \end{equation}
    and let \(N = N^{(t)}\) and \(t_N = T\), so that T is always the last point on the mesh
\end{definition}
Let Definition \ref{DefN(t)} be satisfied with \(N_{\min}^{(t)} := \lfloor t/h_{\max} \rfloor\) and \(N_{\max}^{(t)} := \lceil t/h_{\min} + \bar{N}_t \rceil \) 
We ensure that we reach the final time by taking \(h_N = T - t_{N-1}\) as our final step, and use the backstop method if \(h_N < h_{\min}\). Recall from Section \ref{sec:1} that \(\bar{N}_t < \infty\) a.s for all $t \in [0, T]$, where $\mathbb{S}$ is a finite set, therefore $N_{\max}^{(t)}$ is an a.s finite random variable.

\subsection{An explicit discretisation scheme for an SDEwMS}

While the main result in this article applies for any strongly convergent numerical method, in this section we give the example of the explicit Euler-Maruyama and Milstein methods, and show how may be adapted to numerically solve an SDEwMS. 

\subsubsection{The explicit Euler-Maruyama scheme}
We may extend \eqref{eq:EMdiscrete}, presented in Section \ref{sec:1}, to solve an SDEwMS over an adaptive mesh $\{t_n\}$ as
\begin{equation}
    X_{n+1} = X_n + h_{n+1} f(X_n, r(t_n)) + g(X_n, r(t_n))\Delta W_{n+1}.
\end{equation}
Then the continuous approximation for all \(s \in [t_n, t_{n+1}]\) is
\begin{equation}
    \bar{X} (s) = X_n + \int_{t_n}^s f(\bar{X}(t_n), r(t_n)) dr + \int_{t_n}^s g(\bar{X}(t_n), r(t_n)) dW(r).
\end{equation}
Notice that since switching times are included in the adaptive mesh as constructed in Section \ref{sec:mesh} we can simply fix the value of the Markov chain $r$ to the value observed at the beginning of each step. 

\subsubsection{The Milstein scheme}
In the case where $d=m=1$, a continuous form of the Milstein scheme for the SDE \eqref{eq:SODE} may be given as
\begin{equation}\label{eq:Mildiscrete}
    X_{n+1} = X_n + h_{n+1}f(X_n) + g(X_n)\Delta W_{n+1}+\frac{1}{2}g'(X_n)g(X_n)\left(\Delta W_{n+1}^2-h_{n+1}\right),
\end{equation}
with corresponding continuous version
\begin{multline}\label{eq:Milcont}
    \bar{X} (s) = X_n + \int_{t_n}^s f(\bar{X}(t_n)) dr
    \\+ \int_{t_n}^s g(\bar{X}(t_n)) dW(r)+\int_{t_n}^s\int_{t_n}^rg'(\bar{X}(t_n))g(\bar{X}(t_n))dW(p)dW(r).
\end{multline}

Again, we extend \eqref{eq:Mildiscrete} to solve an SDEwMS over an adaptive mesh $\{t_n\}$ as
\begin{multline}\label{eq:Mildiscreteadapt}
    X_{n+1} = X_n + h_{n+1}f(X_n, r(t_n)) + g(X_n, r(t_n))\Delta W_{n+1}
    \\+\frac{1}{2}g'(X_n, r(t_n))g(X_n, r(t_n))\left(\Delta W_{n+1}^2-h_{n+1}\right),
\end{multline}
with corresponding continuous version
\begin{multline}\label{eq:Milcontadapt}
    \bar{X} (s) = X_n + \int_{t_n}^s f(\bar{X}(t_n), r(t_n)) dr
    + \int_{t_n}^s g(\bar{X}(t_n), r(t_n)) dW(r)
    \\+\int_{t_n}^s\int_{t_n}^rg'(\bar{X}(t_n), r(t_n))g(\bar{X}(t_n), r(t_n))dW(p)dW(r).
\end{multline}
A full description of the Milstein scheme applied to \eqref{eq:SODE} in the more general $d$-dimensional case with $m$ independent Brownian noise terms requires the simulation of L\'evy areas. Details are in \cite{KLSBit}. When $m=d=1$, the Milstein method \eqref{eq:Milcont} may be expressed as a map $\psi\,:\,\mathbb{R}\times\mathbb{S}\times[0,T]\times[0,h_{\max}]\to\mathbb{R}$ over each step $[t_n,t_{n+1}]$:
\begin{multline}\label{eq:MilMap}
\psi(x,i,t_n,s-t_n):=x+(s-t_n)f(x,i)
\\+\int_{t_n}^{s}g(x,i)dW(v)+\int_{t_n}^{s}\int_{t_n}^{v}g'(x,i)g(x,i)dW(p)dW(v).
\end{multline}

Finally, note that if we modify the second term on the RHS of \eqref{eq:Mildiscrete} to be $h_{n+1}f(X_{n+1})$ we arrive at the implicit Milstein scheme, which will be used later in Section \ref{sec:5}. A strong convergence analysis of this scheme for SDEs with non-globally Lipschitz continuous coefficients is contained in \cite{Desmond}.

\subsection{A hybrid adaptive numerical scheme: general form}
Constructing our scheme we will use two maps $\mathcal{M}: \mathbb{R}^d\times\mathbb{S} \times [0,T] \times [0,h_{\text{max}}]\to\mathbb{R}^d$ and $\mathcal{B}: \mathbb{R}^d\times\mathbb{S} \times [0,T] \times [0,h_{\text{max}}]\to\mathbb{R}^d$ where $\mathcal{M}$ is the main or default map and $\mathcal{B}$ the backstop map. \(\mathcal{M}\) corresponds to an efficient scheme with is convergent as an adaptive method and \(\mathcal{B}\) corresponds to a method which is convergent as a fixed step method but which may be inefficient or may induce distortions in solution dynamics, with the aim that $\mathcal{B}$ will be rarely used. 

We characterise $\mathcal{M}$ and $\mathcal{B}$ through their action over a single step, requiring them to satisfy a certain mean-square consistency bound.

\begin{definition} \label{DefYmap}
    Let $ \{h_{n+1}\}_{n \in \mathbb{N}}$ satisfy Assumption \eqref{assump3.1}. We define the continuous form of hybrid adaptive numerical scheme associated with the timestepping strategy \( \{h_{n+1}\}_{n \in \mathbb{N}} \) to be
    \begin{multline} \label{Y(s)}
        Y(s) = \mathcal{M}( Y(t_n),r(t_n), t_n, s - t_n) \cdot \mathcal{I}_{\{h_{\text{min}} < h_{n+1} \leq h_{\text{max}}\}} \\
         + \mathcal{B}(Y(t_n),r(t_n), t_n, s - t_n) \cdot \mathcal{I}_{\{h_{n+1} \leq h_{\text{min}}\}}.
    \end{multline}
\end{definition}

Now define the global error of the scheme
\begin{definition}
    For any \(s \in [0,T]\), we define the global error acquired by the scheme \eqref{Y(s)} to be
    \begin{equation}
        E(s) : = X(s) - Y(s)
    \end{equation}
    where $X(s)$ satisfies \eqref{GeneralSDEwMS} and $Y(s)$ satisfies \eqref{Y(s)}.
\end{definition}

Then we make the following assumption
\begin{assumption} \label{assump3.2}
    For $s \in [t_n, t_{n+1}]$, $n \in \mathbb{N}$ and any $x, y\in \mathbb{R}^d$, let
    \begin{equation} 
        \begin{split}
            &E_{\mathcal{M}}^{x,y,i} (s) :=  x - \mathcal{M}(y,i, t_n, s - t_n); \\
            &E_{\mathcal{B}}^{x,y,i} (s) :=  x - \mathcal{B}(y,i, t_n, s - t_n).     
        \end{split}
    \end{equation}
        Assume that $E_{\mathcal{M}}^{x,y,i}$ and $E_{\mathcal{B}}^{x,y,i}$ satisfy, for some $\delta > 0$, both $\psi = \mathcal{M} $ or $\psi = \mathcal{B} $, any $i\in\mathbb{S}$, and any a.s. finite \( \mathcal{F}_{t_n}\)-measurable $\mathbb{R}^d$-valued random variables \(A_n, B_n\) 
    \begin{multline} \label{mapError}
        \mathbb{E} \bigr[ || E_{\psi}^{A_n, B_n,i} (t_{n+1}) ||^2 | \mathcal{F}_{t_n} \bigr] \leq || A_n - B_n ||^2 \\
        + \Gamma_1 \int_{t_n}^{t_{n+1}} \mathbb{E} \bigr[ ||E_{\psi}^{A_n, B_n,i} (r)||^2 | \mathcal{F}_{t_n} \bigr] dr + \Gamma_{2, n} h_{n+1}^{2\delta +1}.
    \end{multline}
    In \eqref{mapError}, for any $h_{n+1} \leq h_{\min}$ in the case where $\psi = \mathcal{B}$ and for any $h_{\min} < h_{n+1} \leq h_{\max}$ in the case where $\psi = \mathcal{M}$, $\Gamma_1 < \infty $ is a constant, $\Gamma_{2,n}$ is a scalar $\mathcal{F}_{t_n}$-measurable random variable with finite expectation denoted $\Gamma_2$, and both $\Gamma_1$ and $\Gamma_{2,n}$ are independent of $h_{\max}$.
\end{assumption}

\subsection{A hybrid adaptive numerical scheme: particular form}
To ensure the appropriate bound to satisfy Assumption \eqref{assump3.2} we use a particular kind of adaptive time-stepping strategy such that, whenever \( h_{\text{min}} < h_{n+1} \leq h_{\text{max}}\),
\begin{equation}\label{eq:Ybound}
    || Y(t_n) || < R.
\end{equation}

\begin{lemma}
    Let $Y(s)$ be the hybrid scheme \eqref{Y(s)}. Fix \(n = 0,...,N-1\) and \(k >0\). Then, in the event that \( h_{\text{min}} < h_{n+1} \leq h_{\text{max}}\), $h_{n+1}$ satisfies \eqref{eq:Ybound} 
    \begin{equation}\label{eq:hscheme}
        h_{n+1} = \left(h_{\text{min}} \vee \left(\frac{h_{\text{max}}}{||Y(t_n)||^{1/k}} \wedge h_{\text{max}} \right) \right) \wedge \left(\tau_{\bar{N}_{t_n} +1} - t_n \right)
    \end{equation}
\end{lemma}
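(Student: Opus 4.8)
The plan is to verify that the timestep formula \eqref{eq:hscheme} does two things simultaneously: it produces a value in the admissible range consistent with Assumption \ref{assump3.1}, and it enforces the boundedness condition \eqref{eq:Ybound}. First I would observe that the formula is the minimum of two quantities. The second factor, $\tau_{\bar{N}_{t_n}+1} - t_n$, is simply the time remaining until the next switching time of the Markov chain $r$ after the current node $t_n$; taking the minimum with this quantity guarantees that we never step past a switching time, so that if the adaptive choice would overshoot, the step is truncated exactly to land on $\tau_k$, which is precisely case (2) of Assumption \ref{assump3.1}. The first factor is the adaptive choice proper, and I would analyse it by cases on the size of $\|Y(t_n)\|$.

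Next I would unpack the inner expression $h_{\max}/\|Y(t_n)\|^{1/k} \wedge h_{\max}$. The outer $\wedge h_{\max}$ caps the step at $h_{\max}$, so whenever $\|Y(t_n)\| \geq 1$ the candidate step is $h_{\max}/\|Y(t_n)\|^{1/k}$, which is at most $h_{\max}$ and is a decreasing function of $\|Y(t_n)\|$; when $\|Y(t_n)\| < 1$ the cap is active and the candidate is $h_{\max}$. The outer $h_{\min} \vee (\cdot)$ then floors the result at $h_{\min}$. I would note that the event $\{h_{\min} < h_{n+1} \leq h_{\max}\}$ in which the main map $\mathcal{M}$ is used is exactly the event in which this floor is \emph{not} active, i.e.\ where $h_{\max}/\|Y(t_n)\|^{1/k} > h_{\min}$, since if the adaptive value fell to or below $h_{\min}$ the scheme would switch to the backstop.

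The core step is then to show that on the event $h_{\min} < h_{n+1} \leq h_{\max}$ the bound \eqref{eq:Ybound} holds for an appropriate $R$. On this event the floor $h_{\min}$ is not binding, so the adaptive factor equals $h_{\max}/\|Y(t_n)\|^{1/k} \wedge h_{\max}$ and must itself exceed $h_{\min}$; in particular $h_{\max}/\|Y(t_n)\|^{1/k} > h_{\min}$. Rearranging this inequality using $h_{\max} = \rho h_{\min}$ from Assumption \ref{assump3.1}(1) gives $\|Y(t_n)\|^{1/k} < h_{\max}/h_{\min} = \rho$, and hence $\|Y(t_n)\| < \rho^{k} =: R$. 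I would set $R = \rho^{k}$ and conclude that \eqref{eq:Ybound} holds with this explicit constant, which establishes the claim.

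The main obstacle is the careful bookkeeping at the boundaries between the $\vee$ and $\wedge$ operations: one must be precise about which clause of the nested minimum and maximum is active on the event $\{h_{\min} < h_{n+1} \leq h_{\max}\}$, and confirm that on this event the switching-time truncation $\tau_{\bar N_{t_n}+1} - t_n$ is either inactive or, if active, still leaves the step strictly above $h_{\min}$ (otherwise the scheme would have switched to $\mathcal{B}$ and we would not be on this event). Handling the interaction between the adaptive floor and the switching-time cap, so that the two constraints are mutually consistent and the derived bound $R = \rho^k$ is not disturbed by the truncation, is the delicate part; the algebraic rearrangement itself is routine.
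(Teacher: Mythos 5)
Your argument is correct and is essentially the proof the paper defers to (Lemma 4.2 of \cite{kelly2024strong}): on the event $h_{\min}<h_{n+1}$ the nested minimum structure forces $h_{\max}/\|Y(t_n)\|^{1/k}>h_{\min}$, and rearranging with $h_{\max}=\rho h_{\min}$ gives $\|Y(t_n)\|<\rho^{k}=:R$. The only slight overstatement is your claim that $\{h_{\min}<h_{n+1}\le h_{\max}\}$ is \emph{exactly} the event where the floor is inactive --- it is in fact a subset, since the switching-time truncation $\tau_{\bar N_{t_n}+1}-t_n$ can independently force $h_{n+1}\le h_{\min}$ --- but the lemma only requires the forward implication, so this does not affect the conclusion.
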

For a proof, see Lemma 4.2 in \cite{kelly2024strong}. Since the mesh $\{t_n\}$ has been constructed so that the value of the Markov chain $r$ cannot change between meshpoints, conditions under which the Milstein map \eqref{eq:MilMap} satisfies \eqref{mapError} over a single step with $\delta =1$ are independent of $i$ and the same as those provided in \cite[Section 4]{kelly2024strong}.

\section{Main result}
For any adaptive numerical scheme composed of main and backstop maps that satisfy the bound \eqref{mapError} in Assumption \ref{assump3.2}, our main result demonstrates strong $L_2$ convergence when extended to an SDEwMS via a switching-time adapted mesh. The proof is a modification of that of \cite[Theorem 4.1]{kelly2024strong}, though the change of setting from SJDEs to SDEwMS allows for the analysis to be simplified in places. By using the Milstein method \eqref{eq:Milcont} we can construct a specific scheme of order $\delta=1$.
\label{sec:4}
\begin{theorem}  
Let $(X(t))_{t\in[0,T]}$ be a solution of \eqref{GeneralSDEwMS} with initial value $X(0) = x_0 \in \mathbb{R}^d$ and suppose that Assumptions \ref{assum:Fdecom}, \ref{assum:locLip}, and \ref{assump2.3} hold. Let $Y$ be an adaptive numerical scheme as characterised in Definition \ref{DefYmap} with $Y_0 = x_0$ such that Assumption \ref{assump3.2} holds with some $\delta > 0$. Then there exists a constant $C(T)>0$ such that
    \begin{equation}\label{eq:mainErrorBound}
        \max_{t \in [0,T]} \Big( \mathbb{E}\Big[ \norm{X(t)-Y(t)}^2\Big] \Big)^{\frac{1}{2}} \leq C(T)h_{\max}^{\delta},
    \end{equation}
where C does not depend on $h_{\max}$.
\end{theorem}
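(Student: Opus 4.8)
The plan is to establish the mean-square error bound \eqref{eq:mainErrorBound} by deriving a recursive inequality for the conditional second moment of the global error $E(s) = X(s) - Y(s)$ across meshpoints, and then applying the continuous Gronwall inequality (Lemma \ref{Gronwall}) to close the estimate. First I would decompose the local error on a single step $[t_n, t_{n+1}]$ by inserting the exact solution: writing $E(t_{n+1}) = [X(t_{n+1}) - \psi(Y(t_n), r(t_n), t_n, h_{n+1})]$ where $\psi$ is whichever of $\mathcal{M}$ or $\mathcal{B}$ is active, I would aim to exploit Assumption \ref{assump3.2} with $A_n = X(t_n)$ and $B_n = Y(t_n)$, both of which are $\mathcal{F}_{t_n}$-measurable. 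The consistency bound \eqref{mapError} then yields, conditionally on $\mathcal{F}_{t_n}$,
\begin{equation*}
\mathbb{E}\bigl[\norm{E(t_{n+1})}^2 \mid \mathcal{F}_{t_n}\bigr] \leq \norm{E(t_n)}^2 + \Gamma_1 \int_{t_n}^{t_{n+1}} \mathbb{E}\bigl[\norm{E(r)}^2 \mid \mathcal{F}_{t_n}\bigr]\,dr + \Gamma_{2,n} h_{n+1}^{2\delta+1}.
\end{equation*}
The key subtlety is that $E(t_{n+1})$ here must coincide with $E_\psi^{X(t_n),Y(t_n),r(t_n)}(t_{n+1})$, which requires that the exact solution over $[t_n,t_{n+1}]$ is itself generated by the same map applied to $X(t_n)$ up to the consistency defect absorbed into the $h^{2\delta+1}$ term; this is precisely what the mean-square consistency condition is designed to encode, with the constant state $r(t_n)$ valid because switching times are meshpoints.

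Next I would take expectations to remove the conditioning, sum the local bounds telescopically over $n = 0, \ldots, N^{(t)}-1$, and define $u(t) := \mathbb{E}[\norm{E(t)}^2]$. Summing the martingale-difference structure gives $u(t) \leq \Gamma_1 \int_0^t u(r)\,dr + \sum_{n} \mathbb{E}[\Gamma_{2,n} h_{n+1}^{2\delta+1}]$, where the sum runs over all steps up to time $t$. To control the accumulated local error sum I would use the step-size structure from Assumption \ref{assump3.1}: since $h_{n+1} \leq h_{\max}$ always, each term obeys $h_{n+1}^{2\delta+1} \leq h_{\max}^{2\delta} h_{n+1}$, so that $\sum_n \Gamma_{2,n} h_{n+1}^{2\delta+1} \leq h_{\max}^{2\delta} \sum_n \Gamma_{2,n} h_{n+1}$. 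The remaining task is to bound $\mathbb{E}\bigl[\sum_n \Gamma_{2,n} h_{n+1}\bigr]$ by a constant independent of $h_{\max}$; since $\sum_n h_{n+1} = t_N = T$ on every trajectory, if $\Gamma_{2,n}$ were uniformly bounded this would be immediate, and more generally one controls it using the finite expectation $\Gamma_2$ of $\Gamma_{2,n}$ together with the independence structure of Assumption \ref{assum:Fdecom}.

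The main obstacle, as the authors flag in the introduction, is that the number of meshpoints $N^{(t)}$ is almost surely finite but \emph{not} uniformly bounded over $\Omega$, because the switch count $\bar{N}_t$ can be arbitrarily large on rare sample paths; the random upper bound $N_{\max}^{(t)} = \lceil t/h_{\min} + \bar{N}_t \rceil$ depends on $\bar{N}_t$. This means the telescoping sum has a random number of terms, and one cannot naively pull the expectation through a sum of random length. The resolution, mirroring the jump-diffusion treatment in \cite{kelly2024strong}, is to use Assumption \ref{assum:Fdecom}: conditioning on the entire Markov chain path (i.e.\ on $\mathcal{H}_T$) fixes the switching times and hence fixes the mesh and the value of $\bar{N}_t$, so that $N^{(t)}$ becomes deterministic under the conditional measure; the Brownian-driven error analysis and Gronwall argument can then be carried out pathwise in $r$, and the independence of $\mathcal{G}_t$ from $\mathcal{H}_T$ lets me take a final expectation over the chain, using the a.s.\ finiteness of $\bar{N}_t$ to guarantee the conditional constants integrate to something finite. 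Once the conditional estimate $u(t) \leq c(\omega_r)\exp(\Gamma_1 T)$ with $h_{\max}^{2\delta}$-scaling is in hand, I would verify that averaging over the chain preserves the $h_{\max}^{2\delta}$ factor and yields a finite constant $C(T)$, then take square roots and maximise over $t \in [0,T]$ to obtain \eqref{eq:mainErrorBound}.
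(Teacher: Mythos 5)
Your proposal follows essentially the same route as the paper: apply the consistency bound with $A_n = X(t_n)$, $B_n = Y(t_n)$, telescope the one-step estimates, handle the a.s.\ finite but unbounded switch count by conditioning on $\mathcal{H}_T$ via Assumption \ref{assum:Fdecom}, bound the accumulated defect by $h_{\max}^{2\delta}\sum_n \Gamma_{2,n}h_{n+1}$, and close with the Gronwall inequality. One small correction: conditioning on $\mathcal{H}_T$ does \emph{not} make the mesh or $N^{(t)}$ deterministic, since the adaptive steps still depend on $\|Y(t_n)\|$ and hence on the Brownian path; what it buys is that $\bar{N}_t$ and the bound $N_{\max}^{(t)} = \lceil t/h_{\min} + \bar{N}_t\rceil$ become $\mathcal{H}_T$-measurable, so the random-length sums can be extended with indicators and pulled outside the conditional expectation --- which is exactly how the paper executes this step.
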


\begin{proof}
The proof naturally divides into four steps which successively show how conditional mean-square errors accumulate, first over  the steps between successive switching times, then from the initial time to the final switching time before a fixed time $t$, then from the last switching time to $t$, and finally over the full interval $[0,t]$. We may then apply the Gronwall inequality (see Lemma \ref{Gronwall}) to complete the proof.

 \medskip
 \noindent {\bf Step 1: cumulative error bound over the interval between successive switches.} Over a single step, we obtain from \eqref{mapError} in Assumption \ref{assump3.2} the following error bound regardless of whether the scheme uses the main map or the backstop map over a single step of length $0<h_{n+1}\leq h_{\max}$, and regardless of whether or not a change in the state of the Markov chain $r$ occurs at either endpoint of the timestep: a.s,
 \begin{equation} \label{OneStepError}
     \mathbb{E}\left[\norm{E (t_{n+1})}^2 | \mathcal{F}_{t_n}\right] \leq \norm{E(t_n)}^2 
     + \Gamma_1 \int_{t_n}^{t_{n+1}}\mathbb{E}\left[\norm{E (u)}^2 | \mathcal{F}_{t_n}\right]du
      +\Gamma_{2,n}h_{n+1}^{2\delta+1}.
 \end{equation}
Let $\tau_k$ and $\tau_{k+1}$ be successive switching times. Subtract the first term on the RHS of \eqref{OneStepError} from both sides of the inequality and sum both sides over all the steps from $\tau_k$ to $\tau_{k+1}$. Then we get
\begin{eqnarray*} 
    \mathcal{K}_{k} &:=& \sum_{n=N^{(\tau_k)}}^{N^{(\tau_{k+1})}-1} \bigg( \mathbb{E}\left[\norm{E(t_{n+1}}^2 | \mathcal{F}_{t_n}\right] - \norm{E(t_n)}^2 \bigg)\mathcal{I}_{\{N^{(\tau_{k+1})}>n\}}\\
    &\leq& \sum_{n=N^{(\tau_k)}}^{N^{(\tau_{k+1})}-1} \bigg(\Gamma_1 \int_{t_n}^{t_{n+1}}\mathbb{E}[\norm{E(u)}^2 | \mathcal{F}_{t_n}]du 
     +\Gamma_{2,n}h_{n+1}^{2\delta+1} \bigg)\mathcal{I}_{\{N^{(\tau_{k+1})}>n\}} \\
     &=& \Gamma_1 \sum_{n=N^{(\tau_k)}}^{N^{(\tau_{k+1})}-1} \int_{t_n}^{t_{n+1}}\mathbb{E}\left[\norm{E(u)}^2 | \mathcal{F}_{t_n}\right]\mathcal{I}_{\{N^{(\tau_{k+1})}>n\}}du
     \\&&+ \sum_{n=N^{(\tau_k)}}^{N^{(\tau_{k+1})}-1}\Gamma_{2,n} h_{n+1}^{2\delta+1}\mathcal{I}_{\{N^{(\tau_{k+1})}>n\}},\quad a.s.
\end{eqnarray*}
By Definition \ref{DefN(t)}, the filtration $\mathcal{F}_{t_n}$ may be written as $\mathcal{F}_{t_{N^{(u)}-1}}$, for all $u \in [t_n, t_{n+1})$ and $n=0,\ldots,N^{(u)}-1$, so we can rewrite the inequality as
\begin{equation} \label{KkGamma1}
    \mathcal{K}_{k} \leq \Gamma_1 \int_{\tau_k}^{\tau_{k+1}}\mathbb{E}\left[\norm{E(u)}^2 \big|\mathcal{F}_{t_{N^{(u)}-1}}\right]dr + \sum_{n=N^{(\tau_{k})}}^{N^{(\tau_{k+1})}-1}\Gamma_{2,n} h_{n+1}^{2\delta +1},\quad a.s,
\end{equation}
where all indicator variables on the RHS have been bounded above by $1$.

\medskip
\noindent{\bf Step 2: cumulative error from time $t=0$ to the time of the last switch before time $t$.}
Fix any $t\in[0,T]$. Summing over $k$ for all switching times on the interval $(0, t]$ and denoting $\mathcal{Q}(t):=\sum_{k=0}^{\bar{N}_t -1} \mathcal{K}_{k}$, we have a.s,
\begin{equation}
    \mathcal{Q}(t) \leq \Gamma_1 \sum_{k=0}^{\bar{N}_t -1} \int_{\tau_k}^{\tau_{k+1}}\mathbb{E}\left[\norm{E(u)}^2 \big|\mathcal{F}_{t_{N^{(u)}-1}}\right]du +\sum_{k=0}^{\bar{N}_t -1}\sum_{n=N^{(\tau_{k})}}^{N^{(\tau_{k+1})}-1}
    \Gamma_{2,n} h_{n+1}^{2\delta + 1}.
\end{equation}
Set $\tau_0 = 0$ to include time from $0$ to the first switch $\tau_1$ and take the expectation conditional upon $\mathcal{H}_T$
\begin{multline*}
    \mathbb{E}\left[\mathcal{Q}(t) | \mathcal{H}_T\right] \leq \Gamma_1 \mathbb{E} \left[ \sum_{k=0}^{\bar{N}_t -1} \int_{\tau_k}^{\tau_{k+1}}\mathbb{E}\left[\norm{E(u)}^2 \big|\mathcal{F}_{t_{N^{(u)}-1}}\right]du \bigg|\mathcal{H}_T \right] \\
    + \mathbb{E} \left[\sum_{k=0}^{\bar{N}_t -1}\sum_{n=N^{(\tau_{k})}}^{N^{(\tau_{k+1})}-1} \Gamma_{2,n} h_{n+1}\bigg| \mathcal{H}_T \right] h_{\max}^{2\delta},\quad a.s.
\end{multline*}
Since $\bar{N}_t$ is $\mathcal{H}_T$ measurable we can take all of the summations on the RHS out of the conditional expectations. 
\begin{multline*}
    \mathbb{E}\left[\mathcal{Q}(t) | \mathcal{H}_T\right] 
    \leq \Gamma_1\sum_{k=0}^{\bar{N}_t -1}\mathbb{E}\left[ \int_{\tau_k}^{\tau_{k+1}}\mathbb{E}\left[\norm{E(u)}^2 \big|\mathcal{F}_{t_{N^{(u)}-1}}\right]du \big|\mathcal{H}_T \right] \\
    + \sum_{k=0}^{\bar{N}_t -1} \mathbb{E} \left[\sum_{n=N^{(\tau_{k})}}^{N^{(\tau_{k+1})}-1} \Gamma_{2,n} h_{n+1}\bigg| \mathcal{H}_T \right] h_{\max}^{2\delta},\quad a.s.
\end{multline*}

Since $\mathcal{H}_T \subseteq \mathcal{F}_{t_{N^{(u)}-1}}$ for any $u \in [0, T]$, an application of the tower property of conditional expectations yields 
\begin{multline} 
    \mathbb{E}\left[\mathcal{Q}(t) | \mathcal{H}_T\right]
    \leq \Gamma_1  \int_{0}^{\tau_{\bar{N}_t}}\mathbb{E}\left[\norm{E(u)}^2 |\mathcal{H}_T\right]du\\ + \sum_{k=0}^{\bar{N}_t -1} \mathbb{E} \left[\sum_{n=N^{(\tau_{k})}}^{N^{(\tau_{k+1})}-1} \Gamma_{2,n} h_{n+1}\bigg| \mathcal{H}_T \right] h_{\max}^{2\delta},\quad a.s.\label{zerotoLastSwitchError}
\end{multline}

\medskip
\noindent{\bf Step 3: error bound from the time of the last switch to time $t$.}
Fix $t\in[0,T]$. If $t_{N^{(t)}}=t$ this step can be omitted. Suppose that $t_{N^{(t)}}<t$. Replacing $t_n$ with $t_{N^{(t)}-1}$ and $t_{n+1}$ with $t$ in \eqref{OneStepError}, we have a bound on the error over the last step to time $t$ as
\begin{multline} \label{step4eq1}
        \mathbb{E}\left[\norm{E (t)}^2 | \mathcal{F}_{t_{N^{(t)}-1}}\right]
         \leq \norm{E(t_{N^{(t)-1}})}^2\\ + \Gamma_1 \int_{t_{N^{(t)}-1}}^{t}\mathbb{E}[\norm{E (u)}^2 | \mathcal{F}_{t_{N^{(t)}-1}}]du 
     +\Gamma_{2,n}|t-t_{N^{(t)}-1}|^{2\delta+1}.
\end{multline}
Now summing up to $t$ with the error bound over the last step satisfying \eqref{step4eq1} and taking the expectation conditioned on $\mathcal{H}_T$ we have, for all $t\in[0,T]$ and a.s,
\begin{eqnarray} 
       \mathcal{R}(t)&:=& \mathbb{E}\left[
       \sum_{n = N^{({\tau_{\bar{N}_{t}}})}}^{N^{(t)}-2}\left(
       \mathbb{E}\left[\norm{E (t_{n+1})}^2 | \mathcal{F}_{t_n}\right] - \norm{E(t_n)}^2\right)\mathcal{I}_{\{N^{(t)}>n+1\}}\right.\nonumber
       \\&&\left.\qquad\qquad\qquad\qquad+ \mathbb{E} \left[ 
       \norm{E (t)}^2 \big| \mathcal{F}_{t_{N^{(t)}-1}}
       \right] - \norm{E(t_{N^{(t)}-1})}^2
       \bigg| \mathcal{H}_T \right]\nonumber
       \\
        &\leq& \Gamma_1 \mathbb{E}\left[
        \sum_{n = N^{({\tau_{\bar{N}_{t}}})}}^{N^{(t)}-2} \int_{t_n}^{t_{n+1}}  \mathbb{E} \left[ \norm{E(u)}^2 | \mathcal{F}_{t_n}  \right]\mathcal{I}_{\{N^{(t)}>n+1\}} du\right.
        \\&&\left.\qquad+\int_{t_{N^{(t)}-1}}^{t}\mathbb{E}\left[\norm{E (u)}^2 | \mathcal{F}_{t_{N^{(t)}-1}}\right]du         \bigg|\mathcal{H}_T \right] \nonumber\\
        &&\qquad\qquad+ h_{\max}^{2\delta} \mathbb{E} \left[
        \sum_{n = N^{({\tau_{\bar{N}_{t}}})}}^{N^{(t)}-2}
        \Gamma_{2,n} h_{n+1}
        +\Gamma_{2,N^{(t)-1}}|t-t_{N^{(t)}-1}|
         \bigg|\mathcal{H}_T   \right].\label{eq:S4final}
\end{eqnarray}

Summing the integrals on the RHS of \eqref{eq:S4final}, bounding the indicators on the RHS above by $1$, and again applying the tower property of conditional expectations we get
\begin{multline} \label{lastJumptoLittlet}
       \mathcal{R}(t) 
        \leq  \Gamma_1  \int_{\tau_{\bar{N}_t}}^{t} \mathbb{E} \left[ \norm{E(u)}^2 | \mathcal{H}_T  \right] du \\
        + h_{\max}^{2\delta} \mathbb{E} \left[
        \sum_{n = N^{({\tau_{\bar{N}_{t}}})}}^{N^{(t)}-2}
        \Gamma_{2,n} h_{n+1}
        +\Gamma_{2,N^{(t)-1}}|t-t_{N^{(t)}-1}|
         \bigg|\mathcal{H}_T   \right],\, t\in[0,T],\,a.s.
\end{multline}

\medskip
\noindent{\bf Step 4: cumulative error bound from time $t=0$ to time $t$.}
Adding the error bound associated with the steps that occur after the last switch \eqref{lastJumptoLittlet} to the error bound at the last switch \eqref{zerotoLastSwitchError} we have a.s,

\begin{multline} \label{0toLittletError}
        \mathbb{E}\left[\mathcal{Q}(t) | \mathcal{H}_T \right] + \mathcal{R}(t) \leq \Gamma_1 \int_{0}^{t} \mathbb{E} \left[ \norm{E(u)}^2 \big| \mathcal{H}_T \right] du \\
        + h_{\max}^{2\delta} \mathbb{E} \left[ \sum_{n=0}^{N^{(t)}-2} \Gamma_{2,n} h_{n+1} + \Gamma_{2,N^{(t)-1}} |t - t_{N^{(t)}-1}| \bigg| \mathcal{H}_T \right],\quad t\in[0,T].
\end{multline}

Next lets simplify the LHS of \eqref{0toLittletError}:
\begin{eqnarray*} 
        \lefteqn{\mathbb{E}\left[\mathcal{Q}(t) \bigg| \mathcal{H}_T \right] + \mathcal{R}(t)} 
        \\ &=& \mathbb{E} \left[ \sum_{k=0}^{\bar{N}_t -1} \left(   \sum_{n = N^{(\tau_k)}}^{N^{(\tau_k)}-1} \left( \mathbb{E} \left[ \norm{E(t_{n+1})}^2 \Big| \mathcal{F}_{t_{n}} \right]  - \norm{E(t_n)}^2 \right)\mathcal{I}_{\{N^{(\tau_{k+1})}>n\}}\right.\right. \\
        &&\qquad\qquad\quad\left.\left.+ \sum_{n= N^{(\tau_{\bar{N}_t})}}^{N^{(t)}-2} \left( \mathbb{E}\left[ \norm{E(t_{n+1})}^2 \Big| \mathcal{F}_{t_{n}} \right]  - \norm{E(t_n)}^2 \right)\mathcal{I}_{\{N^{(t)}>n+1\}}
        \right)\right.  \\
        &&\qquad\qquad\qquad\qquad\qquad\qquad\left.+ \mathbb{E} \left[ \norm{E(t)}^2 \Big| \mathcal{F}_{t_{N^{(t)}-1}} \right] - \norm{E(t_{N^{(t)}-1})}^2
        \bigg| \mathcal{H}_T \right] \\
        &=& \mathbb{E} \left[ \sum_{n=0}^{N^{(t)}-2} \left( \mathbb{E} \left[ \norm{E(t_{n+1})}^2 \Big| \mathcal{F}_{t_{n}} \right]  - \norm{E(t_n)}^2 \right)\mathcal{I}_{\{N^{(t)}>n+1\}}\right. \\
        &&\qquad\qquad\qquad\left.+ \mathbb{E} \left[ \norm{E(t)}^2 \Big| \mathcal{F}_{t_{N^{(t)}}-1} \right] - \norm{E(t_{N^{(t)}-1})}^2 \bigg| \mathcal{H}_T \right],\,t\in[0,T],\,a.s.
\end{eqnarray*}

As $N_{\max}^{(t)}=\lceil t/h_{\min}+\bar N_t\rceil$ is $\mathcal{H}_T$ measurable, we can bound $N^{(t)}$ by $N_{\max}^{(t)}$ and move the sum out of the conditional expectation. Again applying the tower property and now the telescoping sum with $E(0)= 0$ we see that
\begin{eqnarray*} 
        \lefteqn{\mathbb{E}\left[\mathcal{Q}(t) | \mathcal{H}_T \right] + \mathcal{R}(t)}\nonumber \\
        &=& \sum_{n=0}^{N_{\max}^{(t)}-2} \mathbb{E}\left[  \mathbb{E} \left[ \norm{E(t_{n+1})}^2\mathcal{I}_{\{N^{(t)}>n+1\}} \big| \mathcal{F}_{t_{n}} \right]  - \norm{E(t_n)}^2\mathcal{I}_{\{N^{(t)}>n+1\}}\right.\nonumber  \\
        && \qquad\qquad\qquad\qquad\left.+ \mathbb{E} \left[ \norm{E(t)}^2 \Big| \mathcal{F}_{t_{N^{(t)}}-1} \right] - \norm{E(t_{N^{(t)}-1})}^2 \Big| \mathcal{H}_t \right] \nonumber\\
        &=& \mathbb{E}\left[ \norm{E(t)}^2 \big| \mathcal{H}_T \right],\, t\in[0,T],\,a.s.\label{LHSstep5}
\end{eqnarray*}
Substituting \eqref{LHSstep5} into \eqref{0toLittletError} we have
\begin{multline} \label{0toTGamma1}
        \mathbb{E}\left[ \norm{E(t)}^2 \big| \mathcal{H}_T \right] \leq \Gamma_1 \int_{0}^{t} \mathbb{E} \left[ \norm{E(u)}^2 \big| \mathcal{H}_T \right] du \\
        + h_{\max}^{2\delta} \mathbb{E} \left[ \sum_{n=0}^{N^{(t)}-2} \Gamma_{2,n} h_{n+1} + \Gamma_{2,N^{(t)-1}} |t - t_{N^{(t)}-1}| \bigg| \mathcal{H}_T \right],\,t\in[0,T],\,a.s.
\end{multline}

Now let's consider the second term on the RHS of \eqref{0toTGamma1}. On each trajectory $\omega \in \Omega$ we can define a step function $\bar{\Gamma}_2(s)(\omega)$, $s \in [t_{n}(\omega), t_{n+1}(\omega))$, $n = 0, \cdots, N^{(t)}(\omega)$.
Then write the summation as an integral and take the expectation to get
\begin{multline*}
        \mathbb{E}\left[ h_{\max}^{2\delta} \mathbb{E} \left[ \sum_{n=0}^{N^{(t)}-2} \Gamma_{2,n} h_{n+1} + \Gamma_{2,N^{(t)-1}} \left|t - t_{N^{(t)}-1}\right| \bigg| \mathcal{H}_T \right] \right] \\
        =h_{\max}^{2\delta} \mathbb{E}\bigg[ \mathbb{E} \bigg[ \int_{0}^{t} \bar{\Gamma}_{2}(s) ds
        \bigg| \mathcal{H}_T \bigg] \bigg].
\end{multline*}
Since $\mathbb{E}[\Gamma_{2,n}] < \Gamma_2 < \infty$ and by construction of the step function we see that
\begin{equation}\label{Step5Gamma2}
        h_{\max}^{2\delta} \mathbb{E}\left[ \mathbb{E} \left[ \int_{0}^{t} \bar{\Gamma}_{2}(s) ds
        \bigg| \mathcal{H}_T \right] \right] 
        = h_{\max}^{2\delta} \left[ \int_{0}^{t} \mathbb{E} \left[ \bar{\Gamma}_{2}(s) \right] ds 
        \right] \leq h_{\max}^{2\delta}t\Gamma_{2}.
\end{equation}
Substitute \eqref{Step5Gamma2} back into \eqref{0toTGamma1} and take expectations on both sides to get
\begin{equation} 
        \mathbb{E}\left[\mathbb{E}\left[ \norm{E(t)}^2 \big| \mathcal{H}_T \right] \right] \leq \mathbb{E}\left[\Gamma_1 \int_{0}^{t} \mathbb{E} \left[ \norm{E(u)}^2 \big| \mathcal{H}_T \right] du \right] + h_{\max}^{2\delta}t\Gamma_2,
\end{equation}
and again it follows from the tower property that
\begin{equation} 
        \mathbb{E}\Big[ \norm{E(t)}^2  \Big]  \leq \Gamma_1 \int_{0}^{t} \mathbb{E} \Big[ \norm{E(u)}^2  \Big] du  + h_{\max}^{2\delta}t\Gamma_2,\quad t\in[0,T].
\end{equation}
Now we can apply the Gronwall inequality (see Lemma \ref{Gronwall}) to get
\begin{equation} 
    \begin{split}
        \bigg(\mathbb{E}\Big[ \norm{E(t)}^2  \Big] \bigg)^{\frac{1}{2}}  &\leq 
        \bigg( h_{\max}^{2\delta}t\Gamma_2 \exp{\Gamma_1 \int_{0}^{t} dr} \bigg)^{\frac{1}{2}} \\
        &\leq \sqrt{\Gamma_2 t \exp{\Gamma_1 t} } h_{\max}^{\delta}\\
        &\leq C(t)h_{\max}^{\delta},\quad t\in[0,T].
    \end{split}
\end{equation}
Taking the maximum over $t \in [0, T]$ on both sides, the bound \eqref{eq:mainErrorBound} and therefore the statement of the theorem follows. 
\end{proof}

\begin{remark}
     The error constant $C$ in the bound \eqref{eq:mainErrorBound} may depend on the stepsize ratio $\rho$ indirectly through $\Gamma_1$ and $\Gamma_2$. However $C$ has no dependence on the switching rates $\lambda_i$ associated with the Markov chain. This is by contrast with the jump case investigated in \cite{kelly2024strong}, where the error constant depends directly on a Poisson jump intensity rate.
\end{remark}

\section{Numerical Results}
\label{sec:5}

\subsection{Implementation of an adaptive numerical method for SDEwMS} 
\label{subsec:5.1}

First, we simulate the trajectory of a given Markov chain $r$ with state space $\mathbb{S}$ and generator matrix $\Gamma$ (see Eq.~\eqref{generatorMatrix}). For each sampled $\omega \in \Omega$, we wish to compute the value of $r(\tau_i, \omega)$, along with the respective switching times $\tau_i$, for $i = 0, 1, ..., \bar{N}_T$. For a trajectory that has just transitioned to state $S_i$, the holding time before transitioning to a different state is an exponentially distributed random variable with parameter $\lambda_i$, where $\lambda_i = \gamma_{i1} + \gamma_{i2} + ... + \gamma_{iL} - \gamma _{ii}$. The conditional transition probabilities are computed using the entries of $\Gamma$ as 
\[
p_{ij} = \frac{\gamma_{ij}}{\gamma_{i1} +\gamma_{i2} + ... + \gamma_{iL} - \gamma_{ii}}.
\]
Then for each state $i$ we can create a probability mass function with these transition probabilities and sample from it to simulate which state $S_j$ is chosen at each switching time. We store the switching times $(\tau_i)_{i\geq0}$ and the states selected $(S_i)_{i\geq0}$ as two separate vectors for each trajectory $\omega$ to use in the main algorithm.

Then we can begin to construct the mesh, with specific timestepping strategy as described in \eqref{eq:hscheme}. The step size, $h_n$, taken for each step dictates whether we implement the main map $\mathcal{M}$ or the backstop map $\mathcal{B}$ as per \eqref{Y(s)}. In this section we choose as our main map the Milstein map \eqref{eq:MilMap} and as our backstop map the map associated with the implicit Milstein method. Therefore our adaptive method will be strongly convergent in mean-square with order $\gamma=1$.

Running this method for a specified number of trajectories $\omega$ with initial starting point $L_0$, we plot the final values on each trajectory $L_T(\omega)$ as a density histogram.

\subsection{Example: a stochastic model of telomere length with Markovian switching}
Consider the following variant of the SDE \eqref{eq:telomere}
\begin{equation}\label{eq:telomereMS}
dL(t)=-\left(C(t)+A(t)L(t)^2\right)dt+\sqrt{\frac{1}{3}A(t)L(t)^3}dW(t),
\end{equation}
where the random pair $(C(t),A(t))_{t\geq 0}$ evolves according to a Markov chain that is independent of $W$ and with state space  $\mathbb{S}=\{1,2,3,4\}$. Here, each $i\in\mathbb{S}$ corresponds to an element of the set $\mathbb{S}'=\{(c_1,a_1),(c_1,a_2),(c_2,a_1),(c_2,a_2)\}$, where $a_i$ and $c_j$, $i,j=1,2$ are constants calculated from data in~\cite{grasman2011stochastic}. There, different estimation methods provided different values for these constants and so we can consider the use of Markovian switching in this context to be a form of robustness testing. The estimates we will use for our numerical analysis are $(c_1,a_1)=(4.5, 0.22 \times 10^{-6})$, and $(c_2,a_2)=(7.5, 0.41 \times 10^{-6})$, and we will switch between the corresponding states in $\mathbb{S}'$ according to $r$ generated with the algorithm described in Section \ref{subsec:5.1}. For the purposes of our numerical demonstration the generator is chosen to be
\[
\Gamma=\begin{bmatrix}
    -0.3 & 0.1 & 0.1 & 0.1\\
    0.1 & -0.3 & 0.1 & 0.1\\
    0.1 & 0.1 & -0.3 & 0.1\\
    0.1 & 0.1 & 0.1 & -0.3 
\end{bmatrix}
\]
For the adaptive mesh, we set $h_{max}=3 \times 10^{-2}$, $k = 10$ and $\rho=15$ throughout.

\subsection{Numerical Results: Fixed Initial Length $L_0$}

For our fixed initial length we have chosen $L_0 = 1000$ and $T=30$. In Figure \ref{fig:1} we display a histogram of $1000$ values, with $\mathbb{E}[L_{30}]\approx 814.33$ base pairs (bp).
\begin{figure}
    \centering
    \resizebox{0.9\textwidth}{!}
    {\includegraphics{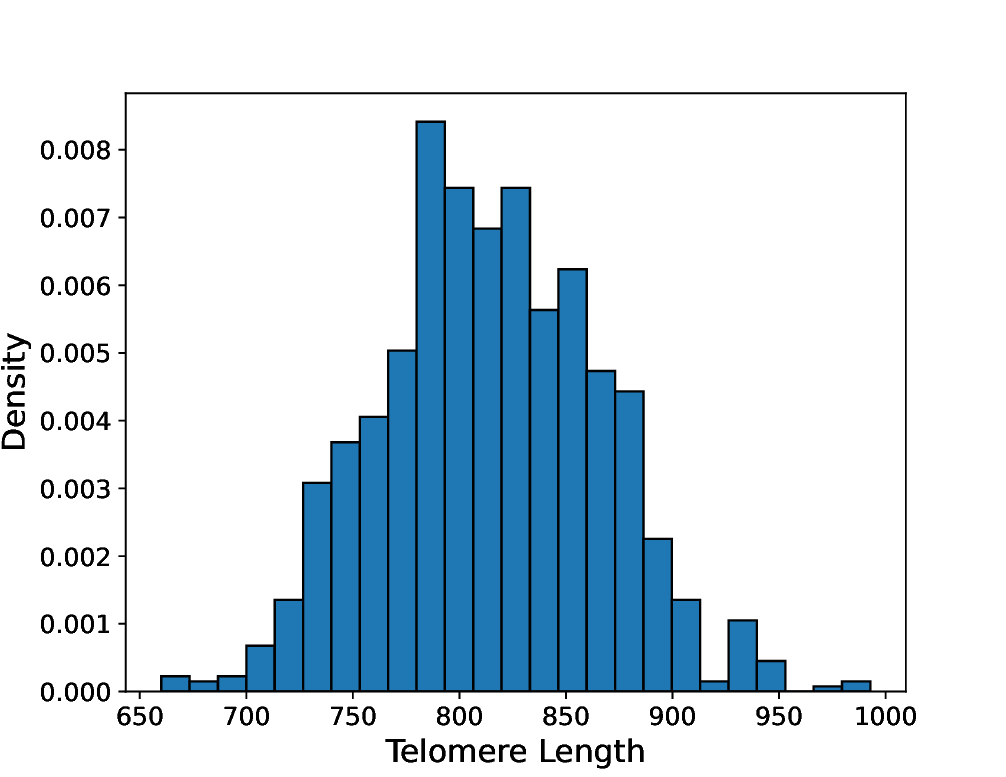}}
    \caption{Histogram of $L_{30}(\omega)$ for 1000 samples.}
    \label{fig:1}
\end{figure}

In Figure \ref{fig:3} we display density histograms produced when the parameters are fixed at $(c,a)=(c_1,a_1)$ (top) and $(c,a)=(c_2,a_2)$ (bottom). The means are computed to be $\mathbb{E}[L_{30}]\approx 862.69$ bp and $\mathbb{E}[L_{30}]\approx 770.75$ bp respectively.
\begin{figure}
    \centering
    \resizebox{0.9\textwidth}{!}{
    \includegraphics{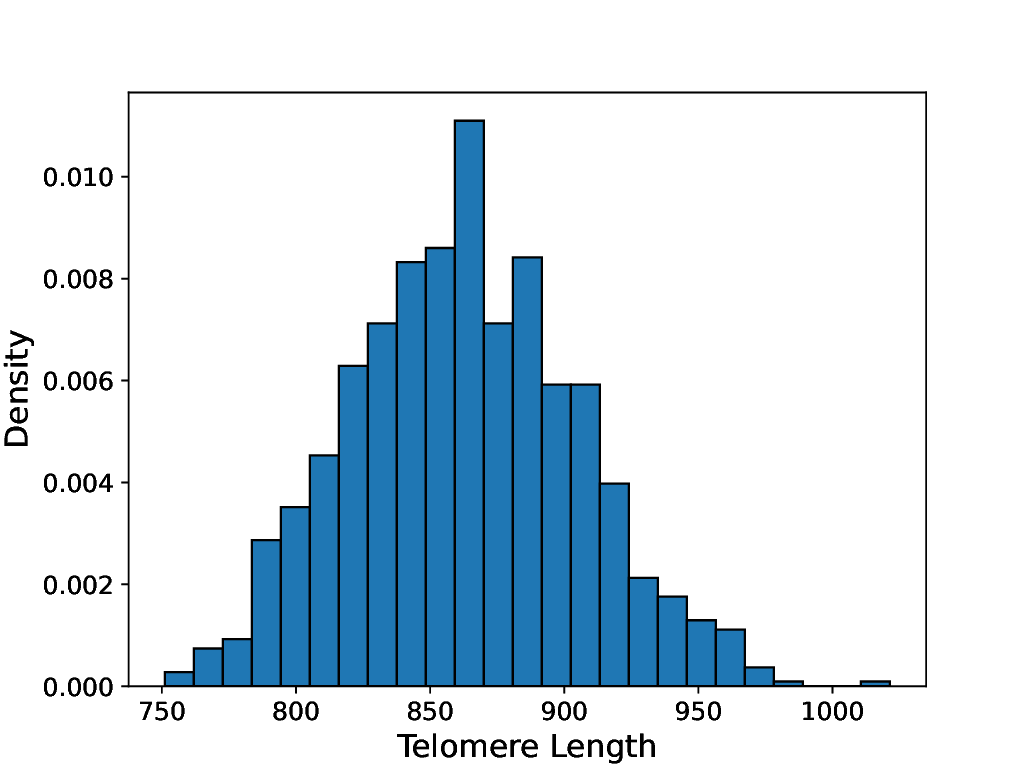}}
    \resizebox{0.9\textwidth}{!}{
    \includegraphics{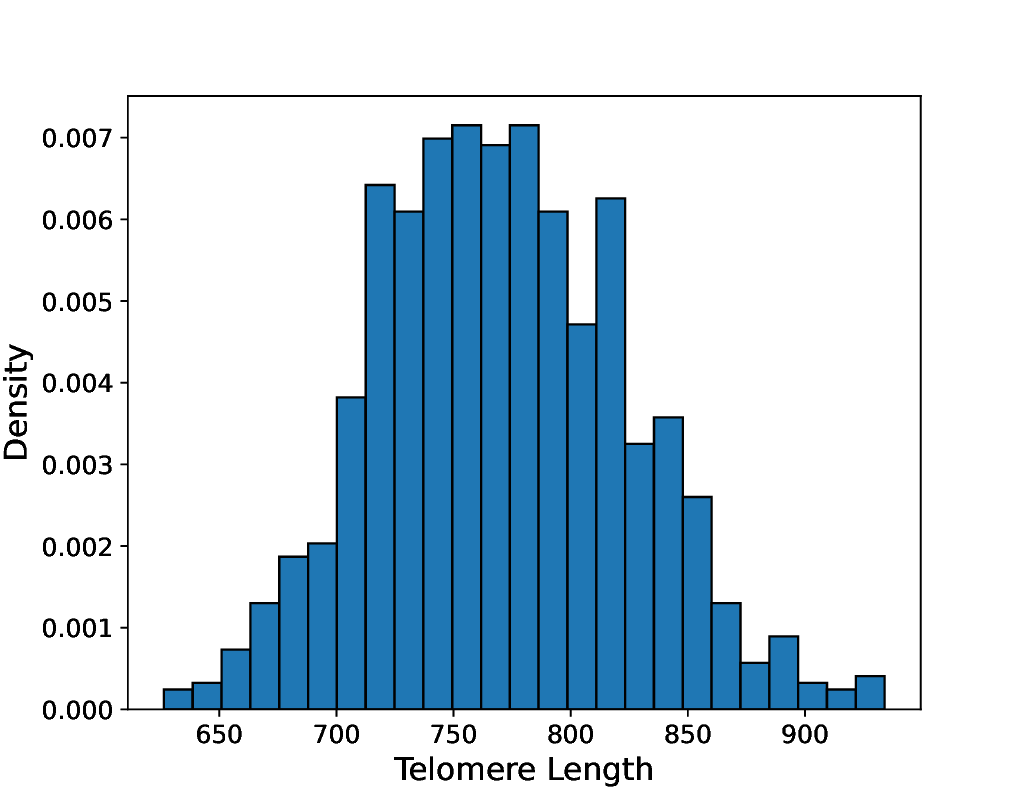}}
    \caption{Histograms of $L_{30}(\omega)$ for 1000 samples for the SDE without switching and choosing $(c,a)=(c_1,a_1)$ (top) and $(c,a)=(c_2,a_2)$ (bottom).}
    \label{fig:3}
\end{figure}

\subsection{Numerical Results: Uniformly Distributed Initial Lengths $L_0$}

Next we randomly generated 1000 initial lengths from a uniform distribution with range $[4000, 8000]$ and ran the algorithm 100 times for each starting length. 
In Figure \ref{fig:4} we display a line graph comparing initial lengths, and for each initial length the final value of a single trajectory as well as the mean value of each end length.  
\begin{figure}
    \centering
    \resizebox{0.9\textwidth}{!}{
    \includegraphics{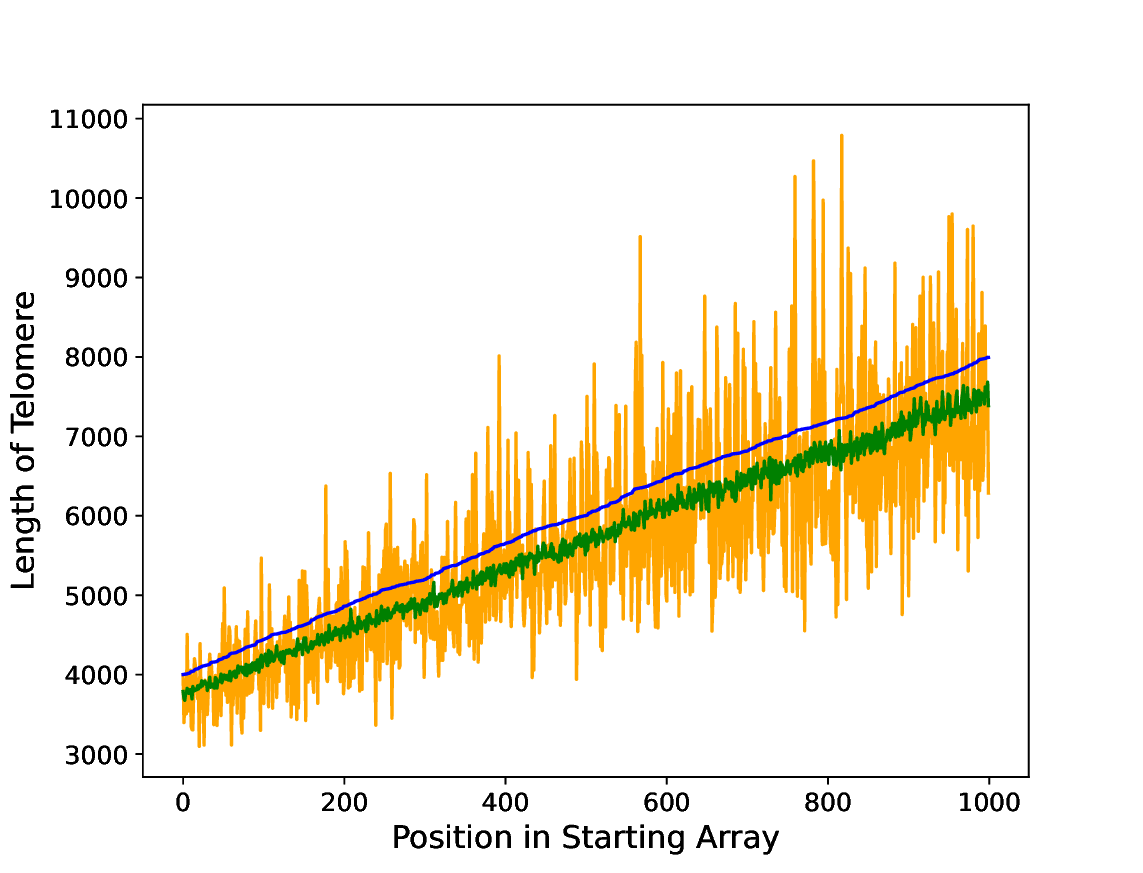}}
    \caption{Initial telomere lengths ordered (blue), actual final length for a single trajectory (orange), and the mean change in length over all trajectories for a given initial value (green).}
    \label{fig:4}
\end{figure}

 And finally in Figure \ref{fig:5} we display a histogram which shows the mean change in length of the telomere. We calculated the mean change for all 1000 initial lengths, over 100 simulations. The mean change observed over all initial lengths and across all simulated trajectories was found to be $\mathbb{E}[L_{5}-L_{30}]\approx-350.74$ bp. The distribution of the simulated data presented here may be compared to the data presented in \cite{grasman2011stochastic}, and we note a qualitative similarity in the mean change histograms. The difference between our estimated mean change and that given in \cite{grasman2011stochastic} ($\mathbb{E}[L_5-L_{30}]\approx-223.68$ bp) may be the result of using a model with Markovian parameters (including potentially the choice of $\Gamma$), as well as possibly reflecting sampling variation and deviations from a uniform distribution of initial values in the real-world data presented in~\cite{grasman2011stochastic}. A more detailed investigation of this is suggested as a future line of research.

\begin{figure}
    \centering
    \resizebox{0.9\textwidth}{!}{
    \includegraphics{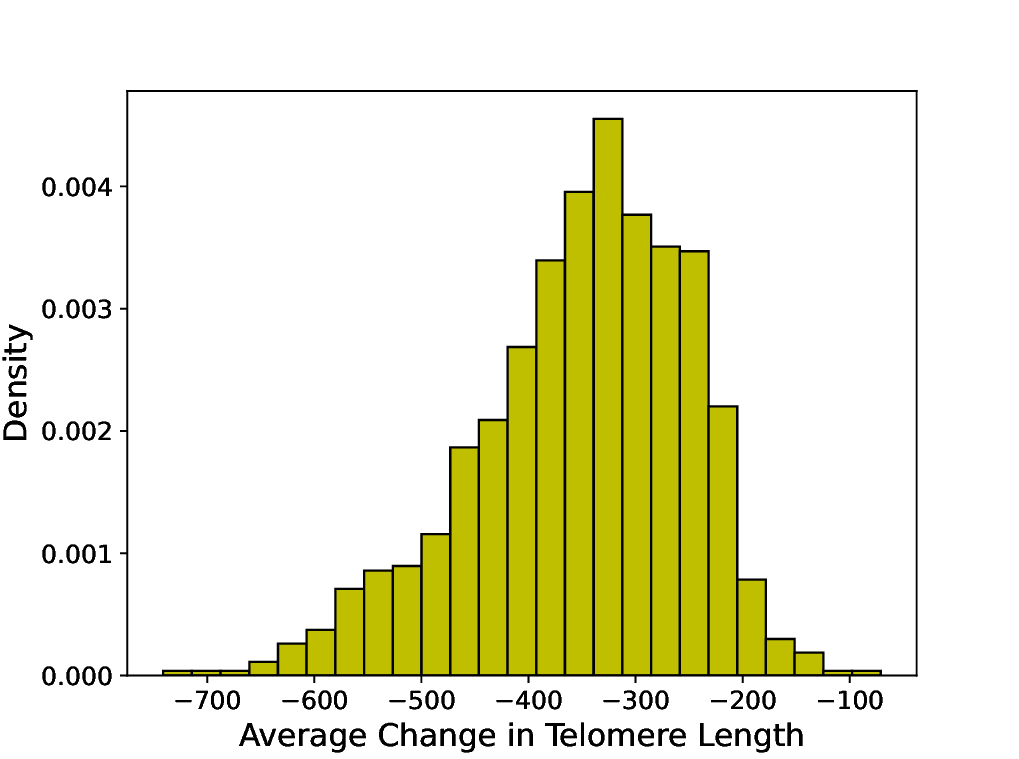}}
    \caption{Density histogram of the mean change in telomere lengths between day $5$ and day $30$.}
    \label{fig:5}
\end{figure}

\medskip
\noindent {\bf Acknowledgement:} CK was supported by the RSE Saltire Research Facilitation Network on Stochastic Differential Equations: Theory, Numerics and Applications (RSE1832).

\bibliographystyle{plain}

\end{document}